\newtheorem{lemma}{Lemma}[section]
\newtheorem{theorem}{Theorem}[section]
\newtheorem{definition}{Definition}[section]
\newtheorem{remark}{Remark}[section]
\numberwithin{equation}{section} \numberwithin{theorem}{section}
\numberwithin{example}{section} \numberwithin{remark}{section}
\numberwithin{figure}{section} \numberwithin{algorithm}{section}
\begin{document}
\title[Density estimates]{Density estimates for Ginzburg-Landau energies with degenerate double-well potentials}
\author{Ovidiu Savin}
\address{Department of Mathematics, Columbia University, New York, NY 10027}\email{savin@math.columbia.edu}
\author{Chilin Zhang}
\address{School of Mathematical Sciences, Fudan University, Shanghai 200433, China}\email{zhangchilin@fudan.edu.cn}

\begin{abstract}
    We consider a class of Allen-Cahn equations associated with Ginzburg-Landau energies involving degenerate double-well potentials that vanish of order $m$ at the minima
    \begin{equation}
        J(v,\Omega)=\int_{\Omega}\Big\{|\nabla v|^{p}+(1-v^{2})^{m}\Big\}dx,\quad 1<p<m,
    \end{equation}
   and establish density estimates for the level sets of nontrivial minimizers $|v| \le 1$. This extends a result of Dipierro-Farina-Valdinoci  where the density estimates for such degenerate potentials were obtained for a bounded range of $m$'s. The original estimates for the classical case $p=m=2$ were established by Caffarelli-C\'ordoba.
\end{abstract}

\maketitle

\section{Introduction}
The Ginzburg-Landau energy was developed from the theory of Van der Waals (see \cite{Row79}) by Landau, Ginzburg and Pitaevski\u i in \cite{GP58,Landau37,Landau67} in order to describe the phenomenon of phase transitions in thermodynamics (see also \cite{AC72,CH58}). It consists of studying a global minimizer $u:\mathbb{R}^{n}\to[-1,1]$ of the energy
\begin{equation}\label{eq. classical GL}
    J(v,\Omega)=\int_{\Omega}\Big\{|\nabla v|^{2}+(1-v^{2})^{2}\Big\}dx,
\end{equation}
in which $v$ represents the mean field of the spin of the particles in a lattice. Its Euler-Lagrange equation is the so-called Allen-Cahn equation:
\begin{equation}\label{eq. classical AC}
    \Delta u=2(u^3-u).
\end{equation}
Apart from the trivial minimizers $u\equiv\pm1$, the more complicated and interesting question is to study minimizers or critical points representing phase transitions, i.e. a solution that can be sufficiently close to both $1$ and $-1$ (two steady states), but with a phase field region $|u|\leq1-\epsilon$ in between.


In this paper, we consider the following generalization of the classical Ginzburg-Landau energy:
\begin{itemize}
    \item The Dirichlet energy is generalized to $|\nabla v|^{p}$ for $1<p<\infty$ due to more complicated mutual forces between particles;
    \item The potential energy is a generic double-well potential $W(v)$ with degenerate minima at $\pm 1$, for example: $W(v) \sim (1-v^{2})^{m}$.
\end{itemize}
Then, the Ginzburg-Landau energy becomes
\begin{equation}\label{eq. GL energy pm intro}
    J(v,\Omega)=\int_{\Omega}\Big\{|\nabla v|^{p}+W(v)\Big\}dx
\end{equation}
with a typical infinitesimal potential energy
\begin{equation}
    W(v)=(1-v^{2})^{m}.
\end{equation}
The Euler-Lagrange equation of a critical point for $J(u, \Omega)$ is
\begin{equation}
    p\Delta_{p}u=p\cdot\mathrm{div}(|\nabla u|^{p-2}\nabla u)=W'(u).
\end{equation}

The heteroclinical solution is the monotone one-dimensional solution (unique up to translations) that connects the stable phases $- 1$ and $+1$ as $x$ ranges from $-\infty$ to $ \infty$. The rate of decay of this solution to the limits $\pm 1$ depends on the values of $m$ and $p$. Precisely, if $m <p$ then the limits are achieved outside a finite interval, while if $m=p$ the rate of decay is exponential. On the other hand if $m>p$, the decay rate is polynomial. This last case can be viewed as a more degenerate situation since the convergence to the stable phases occurs at a much slower rate than exponential. Equivalently, the width of the phase field region $|u|\le 1-\varepsilon$ of the heteroclinical solution is $O(\varepsilon^{-\alpha(m)})$ for some positive exponent $\alpha(m)$ which increases with $m$. 

 It is well known that phase transitions modeled by minimizers $u\in W^{1,p}(B_R,[-1,1])$ of $J$ defined in a large ball $B_R \subset \mathbb R^n$ are closely connected to minimal surfaces. More precisely, the rescaling of the transition region $\frac 1R \{|u| \le 1-\varepsilon\}$ of $u$ from $B_R$ to the unit ball $B_1$ is well approximated by a minimal surface in $B_1$. In the classical case $p=m=2$ the approximation is made rigorous in three main steps:
\begin{itemize}
    \item[(1)] The $\Gamma$-convergence result established by Modica and Mortola in \cite{Modica,MM77}, see also \cite{Bou90,OS91}. One considers the blow down rescalings $u_{R}(x):=u(Rx)$ in $B_{1}$ for a sequence of radii $R \to \infty$. Then, up to a subsequence, $u_{R}(x)$ converges weakly in the BV norm and strongly in $L^{1}$ norm to a set characteristic function $$u_{\infty}=\chi_{E}-\chi_{E^{c}},$$ where $E$ is a Caccioppoli set with minimal perimeter in $B_1$.
    \item[(2)] The density estimate obtained by Caffarelli and C\'ordoba in \cite{CC95}, see also \cite{DFV18,FV08,PV05b,PV05a,V04}. It asserts that
    \begin{equation}
        \Big|\{u\geq0\}\cap B_{R}\Big|\geq\delta R^{n}\mbox{ and }\Big|\{u\leq0\}\cap B_{R}\Big|\geq\delta R^{n},
    \end{equation}
    implying that $0\in\partial E$ and that the convergence of the level sets of $u_R$ to $\partial E$ is uniform on compact sets of $B_1$. 
    
   When $n\leq7$, the Bernstein theorem \cite{Simonscone} implies that $\partial E$ is smooth and then the zero set of a non-trivial minimizer must be asymptotically flat at large scales. On the other hand the asymptotic flatness does not hold in general dimensions, as was shown by the example of Del Pino-Kowalczyc-Wei in \cite{DKW11}. 
    \item[(3)] The convergence of $\{u_R=0\}$ to $\partial E$ in the stronger $C^{2,\alpha}_{loc}(B_1)$ sense. This follows from the improvement of flatness result established by the first author in \cite{S09} in connection with a conjecture of De Giorgi (see also \cite{SV05,VSS06} for similar results when $p\in(1,\infty)$.)
    
\end{itemize}

As mentioned above, when $m<p$ the minimization problem produces a free boundary of Alt-Phillips type for the region $u\neq\pm1$, while when $m=p$ the convergence of $u$ to the  $\pm 1$ phases is exponential. On the other hand, the assumption $m> p$ produces less stable minimal points (still at $\pm1$) for an infinitesimal potential energy $W(v)\sim(1-v^{2})^{m}$. It is then natural to investigate whether the results above extend to these type of degenerate Ginzburg-Landau energies.

In \cite{DFV18}, Dipierro, Farina and Valdinoci considered $Q$-minimizers of such degenerate energies and obtained the density estimates for a certain range of $m$'s depending on the dimension $n$. Precisely, the authors considered general Ginzburg-Landau type energies
\begin{equation}\label{eq. GL energy}
    J(u,\Omega)=\int_{\Omega}E(\nabla u,u,x)dx
\end{equation}
where
\begin{equation}\label{eq. E(xi,tau,x) assumption}
    \lambda(|\xi|^{p}+|\tau+1|^{m}\chi_{\{\tau\leq0\}})\leq E(\xi,\tau,x)\leq\Lambda(|\xi|^{p}+|\tau+1|^{m}).
\end{equation}
We recall that the terminology $Q$-minimizer is a relaxation of the notion of minimizer up to a multiplicative factor.
\begin{definition}
    We say that $u\in W^{1,p}(\Omega,[-1,1])$ is a $Q$- minimizer of \eqref{eq. GL energy pm intro}, if for every bounded open set $\Omega' \subset\subset \Omega$, and any $v\in W^{1,p}_{loc}(\Omega,[-1,1])$ such that $u=v$ in $\Omega\setminus \Omega'$, we have
    \begin{equation}
        J(u,\Omega')\leq Q\cdot J(v,\Omega').
    \end{equation}
\end{definition}
In \cite{DFV18} it was proved that if
\begin{equation}\label{eq. power assumption, is it optimal?}
    \frac{pm}{m-p}>n
\end{equation}
and $\Big|\{u\geq0\}\cap B_{1}\Big|>c$ for some positive $c$, then
\begin{equation}
    \Big|\{u\geq0\}\cap B_{R}\Big|\geq\delta R^{n}.
\end{equation}
We remark that the density estimates in the non-degenerate case $0<m\leq p$ were obtained by Farina and Valdinoci in the earlier work \cite{FV08}. Based on these results, the density estimates hold in the following cases:
\begin{itemize}
    \item $p\geq n$ 
    \item $p<n$ and $0<m<\frac{pn}{n-p}$.
\end{itemize}

In this paper, we show that the assumption \eqref{eq. power assumption, is it optimal?} can be removed provided that there exists an initial ball $B_\rho$ of a fixed large radius in which the density estimate holds, see \eqref{eq. additional assumption} below. We state the density estimate for $Q$-minimizers.

\begin{theorem}\label{thm. main}
    Fix constants $n,p,m,Q,\lambda,\Lambda$ with $1<p<m$, and let $u$ be a $Q$-minimizer of the energy \eqref{eq. GL energy}, with $E(\xi,\tau,x)$ satisfying \eqref{eq. E(xi,tau,x) assumption}. 
    
    Given any $\epsilon >0$, there exist $\rho_{0}=\rho_{0}(\epsilon)$ large and $\delta=\delta(\epsilon)$, so that if
    \begin{equation}\label{eq. additional assumption}
        \Big|\{u\geq0\}\cap B_{\rho}\Big|\geq\epsilon\rho^{n}
    \end{equation}
    for some $\rho\geq\rho_{0}$, then
    \begin{equation}
        \Big|\{u\geq0\}\cap B_{R}\Big|\geq\delta R^{n}
    \end{equation}
    for all $R\geq\rho$ for which $B_R \subset \Omega$.
\end{theorem}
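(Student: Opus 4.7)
The plan is to run a Caffarelli--C\'ordoba style volume iteration on $V(R) := |\{u \ge 0\} \cap B_R|$, with the hypothesis $V(\rho) \ge \epsilon \rho^n$ serving as the base case. The target is a discrete inequality of isoperimetric type
\[
V(R+L) - V(R) \,\ge\, c\, V(R)^{(n-1)/n} \,-\, \eta(L)\, R^{n-1},
\]
valid for a fixed width $L = L(\rho_0)$ and with $\eta(L) \to 0$ as $L \to \infty$. Iterated at dyadic scales $R_k = 2^k\rho$, this would yield $V(R) \ge \delta R^n$ for all $R \ge \rho$.

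To derive the inequality, at each scale $R \ge \rho$ I would test the $Q$-minimality of $u$ against the competitor $v = \min(u, \psi_L(|x|))$, where $\psi_L$ is a radial truncation of the one-dimensional heteroclinic profile of the $p$-Allen--Cahn operator, interpolating from $-1$ on $\partial B_{R-L}$ to $+1$ on $\partial B_R$ and identically $-1$ inside $B_{R-L}$. Although in the degenerate regime $m > p$ the heteroclinic profile converges to $\pm 1$ only polynomially, the surface tension
\[
\sigma \,=\, c_{p,m}\int_{-1}^{1} W(s)^{1/p'}\,ds
\]
is still finite, so for $L$ large enough the annular transition cost is at most $(\sigma + \eta(L))R^{n-1}$. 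This is exactly the step which forces $L \sim \rho_0$ to be large: a width-$1$ truncation, as in the non-degenerate Caffarelli--C\'ordoba setting, is inadequate to capture $\sigma$ when $m \gg p$. The $Q$-minimality then yields the surface-type energy bound $J(u, B_{R-L}) \le CR^{n-1}$; combined with the symmetric argument for $\{u \le 0\}$ using $\max(u, -\psi_L)$, a coarea slicing at scale $L$, and the Euclidean isoperimetric inequality, one extracts the desired volume inequality.

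The main technical obstacle is the passage from the Ginzburg--Landau energy bound to the geometric perimeter of level sets in the degenerate regime. For $m \gg p$ the transition layer of $u$ has intrinsic thickness $\gg 1$, so a naive slicing at scale one vastly underestimates the true perimeter of $\{u \ge t\}$ and the iteration will not close. The remedy is to carry out the slicing and the comparison at the matched scale $L \sim \rho_0$, so that both the competitor's transition layer and the slicing window absorb the diffuse transition of $u$; this is precisely the reason the theorem requires $\rho \ge \rho_0(\epsilon)$ rather than $\rho \ge 1$. Once $\rho_0$ is chosen large enough that the correction $\eta(L)R^{n-1}$ is dominated by the main term $c\, \epsilon^{(n-1)/n}R^{n-1}$ at the base scale, the induction closes and the density $V(R) \ge \delta R^n$ propagates to all $R \ge \rho$.
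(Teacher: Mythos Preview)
Your proposal has a genuine gap: the target inequality
\[
V(R+L)-V(R)\ \ge\ c\,V(R)^{(n-1)/n}-\eta(L)\,R^{n-1}
\]
with $\eta(L)\to 0$ is precisely what \emph{cannot} be obtained in the degenerate range without extra structure. When you compare $u$ with the radial competitor $\psi_L$ on $S=\{u>\psi_L\}$, the energy $J(\psi_L,S)$ in the transition annulus does not reduce to an increment of $V$ alone. On the set $\{u\le 0\}\cap S$ you can only bound $E(\nabla\psi_L,\psi_L,x)\le C\,|1+\psi_L|^m\le C\,|1+u|^m$, which produces the increment $P_R-P_{R-L}$ of the potential energy $P_R=\int_{B_R}(1+u)^m\chi_{\{u\le 0\}}$. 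The a priori bound is $P_R\le CR^{n-1}$, so this term is $O(R^{n-1})$, not $\eta(L)R^{n-1}$; it cannot be absorbed into a small error. If instead you iterate on $A_R=V_R+P_R$ with step $L$, the discrete ODE reads roughly $A_{R-L}^{(n-1)/n}\le C\big[(A_R-A_{R-L})+R^{n-1}L^{-\gamma}\big]$ with $\gamma=\frac{pm}{m-p}-1$, and closing the induction forces both $L\lesssim\sigma^{-1/n}$ and $L\gtrsim\sigma^{-(n-1)/(n\gamma)}$, which is only compatible when $\gamma\ge n-1$, i.e.\ $\frac{pm}{m-p}\ge n$. This is exactly the restriction of Dipierro--Farina--Valdinoci that the theorem is meant to remove, so the approach you outline would reprove their result but not the present one.

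The paper's proof avoids this by two linked devices you are missing. First, it does \emph{not} iterate at step $L$; it decomposes the transition annulus into unit shells and tracks the decay $E(\nabla v_R,v_R,x)\le C(1+j)^{-pm/(m-p)}$ shell by shell, producing a step-$1$ discrete inequality. Second, the iterated quantity is neither $V_R$ nor $V_R+P_R$ but the weighted combination
\[
\mathcal{M}_R=\sum_{j=0}^{T}\Big(P_{R-j}+\frac{V_{R-j}}{(1+j)^{pm/(m-p)}}\Big),
\]
designed so that the shell-by-shell bound telescopes exactly into $\mathcal{M}_{R+1}-\mathcal{M}_R$. It is this pairing of the unit-width decomposition with the matching weights that decouples the argument from the constraint $\frac{pm}{m-p}>n$. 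Your ``coarea slicing at scale $L$'' and the symmetric $\max(u,-\psi_L)$ competitor do not address this issue.
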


\begin{remark}
The constant $\delta$ can be taken independent of $\epsilon$ for sufficiently large $R$'s.
\end{remark}

The proof of Theorem \ref{thm. main} follows the standard outline given by Caffarelli and C\'ordoba in \cite{CC95} which was also implemented in the subsequent works  \cite{FV08,DFV18} mentioned above. The idea is to reproduce the density estimate for minimal surfaces by comparing the energy of $u$ to the energy of a radial test function $v_R$ in $B_R$ and then deduce from this a discrete ODE inequality for the volume $|\{u>0\}\cap B_R |$ for a sequence of radii $R=kT$, $k \in \mathbb N$, with $T$ a sufficiently large constant. The key improvement in our analysis comes from estimating a term in the energy inequality more carefully, by decomposing it in annular rings of width $1$. This allows us to keep track of the decay of $E(\nabla v_{R},v_{R},x)$ and then end up with a discrete ODE in which the step length is 1, instead of a large step length $T$ as in \cite{CC95,DFV18}. Equivalently, instead of writing the energy inequality for a discrete sequence of radii $R=kT$, we use it for all values of $R$ and then integrate it between intervals of length $T$. A similar technique was used in the uniform density estimates of the zero sets for minimizers of the Alt-Phillips energy with negative exponents established recently by De Silva-Savin in \cite{DS}.

Finally, we consider the case in which $u$ is a minimizer instead of a $Q$-minimizer, and the Ginzburg-Landau energy \eqref{eq. GL energy} is sufficiently smooth. By using the Euler-Lagrange equation, we verify that the assumption \eqref{eq. additional assumption} is always satisfied in a fixed neighborhood of the zero level set and then prove a general density estimate for minimizers.

\begin{theorem}\label{thm. main2}
    Let $1<p<m$, and assume that $u:\Omega \to[-1,1]$ is a minimizer of
    \begin{equation}
        J(v,\Omega)=\int_{\Omega}\Big\{|\nabla v|^{p}+(1-v^{2})^{m}\Big\}dx
    \end{equation}
    with $u(0)=0$. Then there exist some universal constants $\delta,R_{0}>0$, such that for every $R\geq R_{0}$, with $B_R \subset \Omega$, we have
    \begin{equation}
        \Big|B_{R}\cap\{u\geq0\}\Big|\geq\delta R^{n}\mbox{ and }\Big|B_{R}\cap\{u\leq0\}\Big|\geq\delta R^{n}.
    \end{equation}
\end{theorem}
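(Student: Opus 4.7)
The strategy is to reduce Theorem \ref{thm. main2} to Theorem \ref{thm. main} by verifying the hypothesis \eqref{eq. additional assumption} for both $u$ and $-u$. Note that $-u$ is also a minimizer (the energy is invariant under $v \mapsto -v$) with $-u(0)=0$, so it suffices to check the one-sided condition for any minimizer with $u(0)=0$, and the matching bound for $\{u \leq 0\}$ then follows by symmetry.

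The heart of the proof is a non-degeneracy lemma at a universal small scale: there exist constants $c_0, r_0>0$ (depending only on $n,p,m$) such that any minimizer $u$ with $u(0)=0$ satisfies
\[
|\{u \geq 0\} \cap B_{r_0}| \geq c_0 r_0^n \quad \text{and} \quad |\{u \leq 0\} \cap B_{r_0}| \geq c_0 r_0^n.
\]
This has two ingredients. \emph{Continuity:} since $u$ is bounded and satisfies the Euler--Lagrange equation $p\Delta_p u = -2m\, u(1-u^2)^{m-1}$ with bounded right-hand side in $\{|u| \leq 1-\eta\}$, standard regularity gives $|u| \leq 1/2$ on some universal ball $B_{r_0}(0)$. \emph{Clean-up:} if, say, $|\{u \leq 0\}\cap B_{r_0}|$ were sufficiently close to $|B_{r_0}|$, then replacing $u$ on $B_{r_0/2}$ by a competitor equal to $+1$ on an inner ball and interpolating back to $u$ across a thin annulus would strictly lower the energy, forcing $u$ to be uniformly close to $+1$ in a smaller ball and contradicting $u(0)=0$.

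Once the non-degeneracy lemma is in hand, we pass from the scale $r_0$ to a scale $R_0 \geq \rho_0(\epsilon)$. For all $R \geq r_0$, the trivial inclusion gives $|\{u \geq 0\} \cap B_R| \geq c_0 r_0^n$, i.e. relative density at least $c_0(r_0/R)^n$ in $B_R$. Taking $R_0$ large and $\epsilon := c_0(r_0/R_0)^n$ correspondingly small, we can arrange $R_0 \geq \rho_0(\epsilon)$ (this is where one needs a modest quantitative version of Theorem \ref{thm. main}, compatible with the iterative scheme described in its proof). Theorem \ref{thm. main} then gives $|\{u \geq 0\} \cap B_R| \geq \delta R^n$ for all $R \geq R_0$, and the same argument applied to $-u$ yields the bound for $\{u \leq 0\}$. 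For $R \in [r_0, R_0]$, the trivial inclusion and a reduced universal choice of $\delta$ suffice.

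The main obstacle will be the clean-up step. It is the natural single-scale analog of the classical Caffarelli--C\'ordoba improvement of density, but the degenerate potential $(1-v^2)^m$ with $m>p$ changes the energy accounting: on a transition annulus of width $w$ the competitor's gradient contribution scales like $w^{n-p}$, the potential contribution like $w^n$, and these must be balanced against the measure deficit in the bulk. Choosing $w$ correctly and ensuring that $c_0, r_0$ can be taken independent of the minimizer (and of the scale at which the argument is run) is the delicate point, and is a simpler single-scale counterpart of the annular energy analysis that drives the proof of Theorem \ref{thm. main}.
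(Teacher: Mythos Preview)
Your reduction to Theorem~\ref{thm. main} via symmetry is correct, but the bootstrap in your step 3 is circular and cannot close. From the proof of Theorem~\ref{thm. main}, the threshold $\rho_0(\epsilon)$ satisfies $\rho_0(\epsilon)\gtrsim T(\epsilon)/\epsilon$ with $T(\epsilon)\gtrsim \epsilon^{-(n-1)/(n\gamma)}$, so $\rho_0(\epsilon)\gtrsim \epsilon^{-\alpha}$ for some $\alpha>1$. You set $\epsilon=c_0(r_0/R_0)^n$ and then ask for $R_0\ge\rho_0(\epsilon)$, which forces $R_0\gtrsim R_0^{\,n\alpha}$; this has no solution for large $R_0$ (already $\alpha=1$ would give $R_0\gtrsim R_0^{\,n}$, impossible for $n\ge 2$). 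No ``modest quantitative version'' of Theorem~\ref{thm. main} fixes this: the obstruction is that the trivial inclusion only gives density decaying like $R_0^{-n}$, while $\rho_0$ must grow at least like $\epsilon^{-1}$. Your single-scale clean-up is also shaky (the sign is backwards---if $\{u\le 0\}$ has nearly full measure you would replace by $-1$, not $+1$---and at the small scale $r_0$ the energy savings of order $r_0^n$ do not beat the annular cost), but even if it were granted the bootstrap still fails.

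The paper avoids the circularity by not trying to propagate a tiny density. Instead it uses the Euler--Lagrange equation to find, near the origin, a point $x_h$ where $u(x_h)\ge 1-h$: a radial $p$-supersolution is built from an ODE profile and slid toward the origin, and a violation of this would force $J(u,B_\rho)\ge c(h)\rho^n$, contradicting the energy bound $J(u,B_\rho)\le C\rho^{n-1}$ for $\rho$ large. Then, since $p\Delta_p(1-u)=o((1-u)^{p-1})$ when $m>p$, the Harnack inequality gives $u\ge 0$ on a ball $B_{\rho_1(h)}(x_h)$ with $\rho_1(h)\to\infty$ as $h\to 0$. This yields density $\epsilon=\omega_n$ (full!) in an arbitrarily large ball, so $\rho_0(\epsilon)$ is a fixed universal number and Theorem~\ref{thm. main} applies directly. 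The essential idea you are missing is that one must use the PDE structure (barriers and Harnack), not just minimality at a single small scale, to manufacture a \emph{universal} $\epsilon$ before invoking Theorem~\ref{thm. main}.
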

\begin{remark}
    The density estimates hold for a more general class of potential energies $W(u)$, see Theorem~\ref{thm. main3}.
\end{remark}
\begin{remark}
When $p=2$, we may take $R_0=0$ as a consequence of the unique continuation property of solutions to the Laplace equation.
\end{remark}

\section{Proof of Theorem~\ref{thm. main}}
In this section we establish Theorem~\ref{thm. main} and divide the proof into six steps. Most of the computations have been carried out already in \cite{DFV18}, however for completion and clarity of the argument we provide the details. In the proof, positive constants depending only on $n,p,m,Q,\lambda,\Lambda$ are referred to as uniform constants, and they are by default denoted by $C$ or $c$, with their values changing from line to line whenever there is no possibility of confusion. Some universal constants that are recalled later in the proof are labeled as $c_0$, $C_{0}$, etc. in order to avoid circular dependence.
\subsection{The quantities $V_R$, $P_R$ and $\mathcal M_R$}
For every integer $R$, we define two increasing sequences
\begin{equation}
    V_{R}=\Big|B_{R}\cap\{u\geq0\}\Big|,\quad P_{R}=\int_{B_{R}}W(u)dx.
\end{equation}
Let $T=T(\epsilon)$ be a sufficiently large integer depending on $\epsilon$ and the universal constants to be made precise later, and for $R \ge T$ we define a weighted mixture of $V_{R}$ and $P_{R}$ as
\begin{equation}\label{eq. M_R}
    \mathcal{M}_{R}:= \sum_{j=0}^{T}\Big(P_{R-j}+\frac{V_{R-j}}{(1+j)^{\frac{pm}{m-p}}}\Big).
\end{equation}
The goal is to deduce a discrete differential inequality for the sequence $\mathcal M_R$.

We first show that
\begin{equation}\label{P_R upper bound}
    P_{R}\leq C \, R^{n-1},\quad\mbox{for all }R\geq 2.
\end{equation}
For this we consider the competitor
\begin{equation}
    v(x)=\left\{\begin{aligned}
        &-1,&\mbox{if }&|x|\leq R\\
        &|x|-R-1,&\mbox{if }&R\leq|x|\leq R+2\\
        &1,&\mbox{if }&|x|\geq R+2
    \end{aligned}\right.
\end{equation}
and notice that $J(v,B_{R+2})\leq CR^{n-1}$. We set 
$$\overline {\Omega}:=\{x:u(x)\geq v(x)\} \cap \overline{B}_{R+2}, \quad \mbox{and notice that} \quad  B_{R}\subseteq \overline{\Omega}.$$
Then, \eqref{P_R upper bound} follows by the $Q$-minimality of $u$,
$$P_{R}\leq J(u, \overline \Omega)\leq Q\cdot J(v, \overline \Omega)\leq Q \cdot J(v,B_{R+2})\leq C_1 \, R^{n-1}.$$
Since 
$$ \frac{pm}{m-p} >p >1,$$
the series 
$$ \sum_{j=0}^{\infty}(1+j)^{-\frac{pm}{m-p}} < \infty, $$
converges and is bounded above by a universal constant. Using this and \eqref{P_R upper bound} we have
$$\mathcal M_R \le (T+1) C R^{n-1} + C V_R$$ which implies a lower bound for $V_R$ in terms of $\mathcal M_R$:
\begin{equation}\label{eq. AR vs VR}
    V_{R}\geq c _0 \left( \mathcal M_R - C_0 T R^{n-1} \right).
\end{equation}

\subsection{An one-dimensional function $U(t)$} We construct a suitable one-dimensional function $U(t)$ so that
\begin{equation}\label{eq. requirement of U(t)}
    (U')^{p}\sim(1+U)^{m}\mbox{ when }t\leq0.
\end{equation}
Precisely we let
\begin{equation}\label{eq. U(t) definition}
    U(t)=\left\{\begin{aligned}
        &t&\mbox{if }&t\geq0,\\
        &-1+(1-t)^{-\frac{p}{m-p}}&\mbox{if }&t\leq0.
    \end{aligned}\right.
\end{equation}
It is then easy to find that for $t\leq0$,
\begin{equation}
    |U'(t)|\leq C(1-t)^{-\frac{m}{m-p}},\quad W\Big(U(t)\Big)\leq C(1-t)^{-\frac{mp}{m-p}}
\end{equation}
Therefore, for $T\geq1$, we have
\begin{equation}\label{E-T}
    \mathcal{E}(-T):=\int_{-\infty}^{-T}|\nabla U|^{p}+W(U)dt\leq CT^{1-\frac{pm}{m-p}}=CT^{-\gamma},
\end{equation}
with $\gamma>0$ universal.

\subsection{An energy competitor $v_{R}(x)$}
Let $R$ be an integer and we consider a competitor $v_{R}(x)$ using $U(t)$ constructed in \eqref{eq. U(t) definition}
\begin{equation}\label{eq. v_R(x) definition}
    v_{R}(x)=U(|x|-R),\quad x\in B_{R+1}.
\end{equation}
In the annulus $B_{R+1-j} \setminus B_{R-j}$ we have
\begin{align}\label{E0}
 E(\nabla v_R, v_R,x) & \le C(|\nabla v_R|^p + W(v_R)) \, \le C' \, |\nabla v_R| ^p \\
 & \le C |U'(-j)|^p \le C (1+j)^{-\frac{mp}{m-p}}.
 \end{align}
For a sufficiently large $T$, we see by \eqref{E-T}
\begin{align}\label{eq. J(v) outside}
    J(v_{R},B_{R-T})\leq&C R^{n-1}\int_{-\infty}^{-T}|\nabla U(s)|^{p}+W\Big(U(s)\Big)ds\\
    \leq&C R^{n-1}\mathcal{E}(-T)\leq CR^{n-1}T^{-\gamma}.
\end{align}

For each $h\leq1$, we define
\begin{equation}
    S_{R,h}:=\{x\in B_{R+1}:u(x)>h>v_{R}(x)\}.
\end{equation}
We also define $S$ as
\begin{equation}
    S_{R}:=\{x\in B_{R+1}:u(x)>v_{R}(x)\}=\bigcup_{h<1}S_{R,h}.
\end{equation}
Since $v_{R}(x)=1\geq u(x)$ on $\partial B_{R+1}$, it then follows that $\overline{S_{R}}\subseteq B_{R+1}$, and the inclusion is satified by all $S_{R,h}$'s.

The boundary of each $S_{R,h}$ consists of two parts, namely
\begin{equation}
    \partial S_{R,h}=\partial^{u}S_{R,h}\cup\partial^{v}S_{R,h}
\end{equation}
with
\begin{align}
    \partial^{u}S_{R,h}=&\partial S_{R,h}\cap\{u=h\}=S_{R}\cap\{u=h\},\\
    \partial^{v}S_{R,h}=&\partial S_{R,h}\cap\{v_{R}=h\}=S_{R}\cap\{v_{R}=h\}.
\end{align}
By the isoperimetric inequality, we have
\begin{equation}\label{eq. isoperimetric}
    |S_{R,h}|^{\frac{n-1}{n}}\leq C\mathcal{H}^{n-1}(\partial S_{R,h})=C\Big[\mathcal{H}^{n-1}(\partial^{u}S_{R,h})+\mathcal{H}^{n-1}(\partial^{v}S_{R,h})\Big]
\end{equation}

By Young's inequality, we see at $x\in\partial^{u}S_{R,h}$,
\begin{equation}
    W(h)^{\frac{p-1}{p}}\leq C\Big(|\nabla u(x)|^{p-1}+\frac{W(h)}{|\nabla u(x)|}\Big),
\end{equation}
while for $x\in\partial^{v}S_{R,h}$,
\begin{equation}
    W(h)^{\frac{p-1}{p}}\leq C\Big(|\nabla v(x)|^{p-1}+\frac{W(h)}{|\nabla v(x)|}\Big).
\end{equation}
We then integrate \eqref{eq. isoperimetric} with the weight $W(h)^{\frac{p-1}{p}}$, and use the co-area formula:
\begin{align*}
    &\int_{U(-T)}^{1}|S_{R,h}|^{\frac{n-1}{n}}W(h)^{\frac{p-1}{p}}dh\\
    \leq&C\int_{U(-T)}^{1}\int_{\partial^{u}S_{R,h}\cup\partial^{v}S_{R,h}}W(h)^{\frac{p-1}{p}}d\sigma dh\\
    \leq&C\Big\{\int_{U(-T)}^{1}\int_{\partial^{u}S_{R,h}}\big(|\nabla u|^{p}+W(u)\big)\frac{d\sigma}{|\nabla u(x)|}dh\\
    &+\int_{U(-T)}^{1}\int_{\partial^{v}S_{R,h}}\big(|\nabla v|^{p}+W(v)\big)\frac{d\sigma}{|\nabla v(x)|}dh\Big\}\\
    \leq&C\Big\{\int_{S_{R}}\Big(|\nabla u|^{p}+W(u)\Big)dx+\int_{S_{R}}\Big(|\nabla v|^{p}+W(v)\Big)dx\Big\}\\
    \leq&C\Big\{J(u,S_{R})+J(v_{R},S_{R})\Big\}\leq C J(v_{R},S_{R}).
\end{align*}
The last inequality is due to $u$'s being a $Q$-minimizer. In conclusion
\begin{equation}\label{eq. LHS and RHS}
\int_{U(-T)}^{1}|S_{R,h}|^{\frac{n-1}{n}}W(h)^{\frac{p-1}{p}}dh \le C J(v_{R},S_{R}),
\end{equation}
from which we will deduce the desired discrete inequality for $\mathcal M_R$.

\subsection{Lower bound for the left side of \eqref{eq. LHS and RHS}}
For any $T\geq1$ we have,
\begin{equation}
    \int_{U(-T)}^{0}W(h)^{\frac{p-1}{p}}dh\geq c,
\end{equation}
with $c$ universal. For all $h$ satisfying $U(-T)<h<0$, we see
\begin{equation}
    S_{R,h}=\{x:u(x)>h>v_{R}(x)\}\supseteq\{x:u(x)\geq0>U(-T)\geq v_{R}(x)\}.
\end{equation}
Notice that $U(-T)\geq v_{R}(x)$ implies $|x|\leq R-T$, so
\begin{equation}
    |S_{R,h}|\geq|\{u(x)\geq0\}\cap B_{R-T}|=V_{R-T}.
\end{equation}
Therefore,
\begin{equation}
    \int_{U(-T)}^{1}|S_{R,h}|^{\frac{n-1}{n}}W(h)^{\frac{p-1}{p}}dh\geq c(V_{R-T})^{\frac{n-1}{n}},
\end{equation}
which together with \eqref{eq. LHS and RHS} gives
\begin{equation}\label{eq. main}
(V_{R-T})^{\frac{n-1}{n}} \le C J(v_{R},S_{R}).
\end{equation}

\subsection{Upper bound for $J(v_{R},S_{R})$}
Next we estimate the righthand side of \eqref{eq. main} from above. We divide $S_{R}$ into the part in $B_{R-T}$ and $B_{R+1}\setminus B_{R-T}$. In $S_{R}\cap B_{R-T}$, we use the estimate \eqref{eq. J(v) outside} and have
\begin{equation}
    J(v_{R},S_{R}\cap B_{R-T})\leq J(v_{R},B_{R-T})\leq C \, R^{n-1}T^{-\gamma}.
\end{equation}
Next, we divide $B_{R+1}\setminus B_{R-T}$ into several annuli with width $1$:
\begin{equation}
    B_{R+1}\setminus B_{R-T}=\bigcup_{j=0}^{T}(B_{R+1-j}\setminus B_{R-j}).
\end{equation}
We bound $E(\nabla v_{R},v_{R},x)$ in the annular region $(B_{R+1-j}\setminus B_{R-j}) \cap S_R$ according to whether or not $u(x) \ge 0$. If $u(x) \ge 0$ then we use \eqref{E0}, 
\begin{equation}
    E(\nabla v_{R},v_{R},x)\leq C (1+j)^{-\frac{pm}{m-p}},
\end{equation}
while if $u(x)\leq0$, we use that $v_{R}\leq u $ in $S_{R}$, and obtain
$$E(\nabla v_R,v_R,x) \le C W(v_R) \le C W(u).$$
Then
\begin{align}
    J\Big(v_{R},S_{R}\cap(B_{R+j+1}\setminus B_{R+j})\Big)\leq&C\int_{B_{R+j+1}\setminus B_{R+j}}\Big\{W(u)\chi_{\{u\leq0\}}+(1+j)^{-\frac{pm}{m-p}} \chi_{\{u\geq0\}}\Big\}dx\\
    \leq&C \left[(P_{R+1-j}-P_{R-j})+ (V_{R+1-j}-V_{R-j})(1+j)^{-\frac{pm}{m-p}}\right].
\end{align}
Summing up all terms we find
\begin{equation}\label{eq. how to decide C1}
    J(v_{R},S_{R})\leq C \left( R^{n-1}T^{-\gamma}+\sum_{j=0}^{T} \left[ (P_{R+1-j}-P_{R-j})+ (V_{R+1-j}-V_{R-j})(1+j)^{-\frac{pm}{m-p}} \right] \right).
\end{equation}
and recalling definition \eqref{eq. M_R}, this can be rewritten as
\begin{equation}\label{eq. J(v_R,S_R) upper bound}
    J(v_{R},S_{R})\leq C \left( R^{n-1}T^{-\gamma}+ \mathcal M_{R+1} - \mathcal M_R \right).
\end{equation}

\subsection{The discrete inequality}
We combine \eqref{eq. J(v_R,S_R) upper bound}, \eqref{eq. main} and \eqref{eq. AR vs VR} and find the desired discrete inequality
\begin{equation}\label{discrete}
c _1 \left[(\mathcal M_{R-T} - C_0 T R^{n-1}) ^+ \right]^\frac{n-1}{n} \le R^{n-1}T^{-\gamma}+ \mathcal M_{R+1} - \mathcal M_R.
\end{equation}
Here $\delta>0$, and  $c_1$ small, $C_0$ large denote universal constants and \eqref{discrete} holds for all integers $R \ge T \ge 1$. 

 It follows that if $\sigma \le \sigma_0$ with $\sigma_0$ small universal and if 
 \begin{equation}\label{ind}
 \mathcal M_r \ge \sigma r^n, \quad \mbox{for} \quad r \in \{R, R-1,...,R-T\},
 \end{equation}
 for some $T=T(\sigma)$ large and some $R \ge \rho_1=\rho_1(\sigma,T)$ large, then \eqref{discrete} implies that \eqref{ind} continues to hold for $r=R+1$, and therefore for all $r \ge R$ by induction.

Indeed, we choose $$\rho_1 = 2^{n+1} C_0 T \sigma^{-1},$$ 
which implies that $R-T \ge R/2$, hence
$$ \mathcal M_{R-T} - C_0 T R^{n-1} \ge \sigma \left(\frac R2 \right)^n- C_0 T R^{n-1} \ge \frac \sigma2  \left( \frac R 2 \right)^n.$$
Then, by \eqref{discrete}, 
\begin{align}
 \mathcal M_{R+1} - \mathcal M_R & \ge c_2 \sigma^\frac{n-1}{n} R^{n-1} - T^{-\gamma} R^{n-1} \\
 & \ge C_1(n) \sigma R^{n-1} \\
 & \ge \sigma (R+1)^n - \sigma R^n,
  \end{align} 
which gives the claim. Here, in the second inequality, we have used that $T=T(\sigma)$ is sufficiently large, and $\sigma_0$ sufficiently small. Notice that \eqref{ind} implies the desired lower bound for $V_R$ since by \eqref{eq. AR vs VR} 
$$ V_R \ge c_0 (\mathcal M_R - C_0 T R^{n-1}) \ge c_0 \frac \sigma 2 R^n.$$
We remark that the computation above shows that as we let $r \to \infty$, the constant $\sigma$ in \eqref{ind} can be increased  up to $\sigma_0$ which is universal.

Now the conclusion of Theorem \ref{thm. main} is straightforward by choosing $$\sigma = 2^{-n} \epsilon, \quad \quad \rho_0 = \frac 12 \rho_1(\sigma,T) \ge 2T,$$ since the induction hypotheses in \eqref{ind} are satisfied:
$$\mathcal M_r \ge V_r \ge V_\rho \ge \epsilon \rho^n \ge \sigma r^n, \quad \quad \quad \mbox{if} \quad 2 \rho \ge r \ge \rho.$$

\section{Proof of Theorem~\ref{thm. main2}}
In this section, we prove Theorem~\ref{thm. main2}. We restate it below for general potential energies $W(u)$.
\begin{theorem}\label{thm. main3}
    Let $u:\Omega \to[-1,1]$ be a minimizer of \eqref{eq. GL energy pm intro}, where $W(v)$ satisfies that:
    \begin{itemize}
        \item $\lambda\leq\frac{W(\tau)}{(1-\tau^{2})^{m}}\leq\Lambda$ for every $\tau\in(-1,1)$;
        \item $W$ is $C^{1}$ and $\lambda\leq\frac{-W'(\tau)}{(1-\tau^{2})^{m-1}\cdot\mathrm{sgn}(\tau)}\leq\Lambda$ for all $\tau\in(-1,-\frac{1}{2})\cup(\frac{1}{2},1)$.
    \end{itemize}
   Assume that $u(0)=0$. Then there exist some universal constants $\delta,R_{0}>0$, such that for every $R\geq R_{0}$,
    \begin{equation}\label{eq. final goal}
        \Big|B_{R}\cap\{u\geq0\}\Big|\geq\delta R^{n}.
    \end{equation}
\end{theorem}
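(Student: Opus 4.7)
The plan is to reduce Theorem~\ref{thm. main3} to Theorem~\ref{thm. main} by verifying the hypothesis \eqref{eq. additional assumption} at a universal scale. Specifically, I aim to produce universal constants $\rho_1, \epsilon_1>0$ with $\rho_1\ge \rho_0(\epsilon_1)$ such that
\[
|\{u\ge 0\}\cap B_{\rho_1}|\ge \epsilon_1\rho_1^n,
\]
after which Theorem~\ref{thm. main} immediately yields $|\{u\ge 0\}\cap B_R|\ge \delta R^n$ for $R\ge \rho_1=:R_0$. The symmetric estimate for $\{u\le 0\}$ then follows by running the same argument on $-u$, which is also a minimizer because the hypotheses on $W$ (both bullets of Theorem~\ref{thm. main3}) are invariant under $\tau\mapsto -\tau$.

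The first step is to apply interior regularity. Because $|W'(u)|$ is uniformly bounded on $[-1,1]$ under the smoothness hypothesis, the DiBenedetto--Tolksdorf--Uhlenbeck type $C^{1,\alpha}$ estimates apply to the Euler-Lagrange equation $p\Delta_p u = W'(u)$ and provide universal H\"older control. Combined with $u(0)=0$, this gives a universal radius $r_1>0$ such that $|u|\le 1/2$ on $B_{r_1}$. In $B_{r_1}$, the first bullet of the hypotheses yields $W(u)\ge c_0>0$, so this ball contains no portion of the degenerate regime $\{u\approx -1\}$ on which the maximum principle will be deployed.

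The main content of the proof, and the step I anticipate as the principal obstacle, is upgrading this local information to a positive-density bound at a macroscopic universal scale. I would argue by contradiction: assume $|\{u\ge 0\}\cap B_\rho|<\epsilon_1\rho^n$ for every $\rho$ in a range $[r_1,R^*]$. Combined with the energy bound $P_\rho\le C\rho^{n-1}$ obtained as in \eqref{P_R upper bound}, together with $W(u)\ge c$ whenever $|u|\le 1/2$, this forces $u\le -1/2$ on a large portion of $B_\rho$. The second bullet of Theorem~\ref{thm. main3} then gives $W'(u)\ge 0$ on $\{u\le -1/2\}$, so $u$ is a $p$-subsolution of the Euler-Lagrange equation on this set. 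I would compare $u$ with the radial barrier $w(x)=U(|x|-\rho)$ built from the profile in \eqref{eq. U(t) definition}: $w$ approximately solves the radial form of the Euler-Lagrange equation on the outer annulus near $\partial B_\rho$ and satisfies $w(0)=U(-\rho)=-1+(1+\rho)^{-p/(m-p)}<0$. The maximum principle on the portion of $B_\rho$ where $u\le -1/2$, combined with a boundary transfer through $\partial B_{r_1}$ using the H\"older control from the first step, then forces $u$ to be dominated near the origin by a function strictly less than $0$, contradicting $u(0)=0$ for $\rho$ sufficiently large.

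The main technical difficulty in carrying this out is the rigorous execution of the barrier comparison: the one-dimensional profile $U$ only approximately solves the $n$-dimensional Euler-Lagrange equation (it differs by an $O(|x|^{-1})$ curvature correction), the $p$-subharmonicity of $u$ is lost across the level set $\{u=-1/2\}$, and the H\"older control near the origin has to be carefully matched to the polynomial decay rate $(1+\rho)^{-p/(m-p)}$ of $U$ at $-1$. Once the contradiction is engineered, $\rho_1$ and $\epsilon_1$ are fixed by universal constants and Theorem~\ref{thm. main} delivers the desired estimate at all larger scales.
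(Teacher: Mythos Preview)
Your reduction to Theorem~\ref{thm. main} is the right strategy, but the barrier argument you sketch has two genuine gaps. First, the radial profile $w(x)=U(|x|-\rho)$ built from \eqref{eq. U(t) definition} is an \emph{energy competitor}, not a supersolution of the Euler--Lagrange equation: for the rotated profile one has
\[
p\Delta_p w = p\frac{d}{dt}\big((U')^{p-1}\big)+\frac{p(n-1)}{|x|}(U')^{p-1},
\]
and the curvature term is positive, so it pushes $w$ toward being a subsolution rather than a supersolution. There is no free parameter in $U$ to absorb this error. Second, even granting a valid supersolution, the comparison on $\{u\le -\tfrac12\}$ cannot reach the origin, since $u(0)=0>-\tfrac12$ places the origin outside that set; the proposed ``boundary transfer through $\partial B_{r_1}$'' is not a mechanism the maximum principle provides. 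A related issue is that your argument, at best, would produce a positive lower bound $|\{u\ge0\}\cap B_\rho|\ge c$ independent of $\rho$, whereas to invoke Theorem~\ref{thm. main} you need a \emph{density} lower bound at a scale $\rho_1\ge\rho_0(\epsilon_1)$, with $\rho_0$ possibly very large.

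The paper's argument is structured quite differently and sidesteps all three difficulties. It first constructs a genuine strict radial supersolution by solving the perturbed ODE $p[(U')^{p-1}]'=W'(U)-\epsilon$ on a finite interval, the $-\epsilon$ absorbing the curvature term once the rotation radius is large (Lemma~\ref{lem. u quickly tends to 1}). A sliding argument with this barrier, combined with the energy bound \eqref{P_R upper bound}, yields a point $x_h\in B_{\rho(h)}$ with $u(x_h)\ge 1-h$. The decisive step is then Harnack applied to $1-u$: since $m>p$, one has $p\Delta_p(1-u)=-W'(u)=O((1-u)^{m-1})=o((1-u)^{p-1})$, so the Harnack radius $\rho_1(h)$ on which $u\ge0$ around $x_h$ satisfies $\rho_1(h)\to\infty$ as $h\to0$. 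This produces an entire ball of arbitrarily large radius inside $\{u\ge0\}$, so Theorem~\ref{thm. main} applies with $\epsilon=\omega_n$ fixed and $\rho_1(h)\ge\rho_0(\omega_n)$ for $h$ small enough. The Harnack mechanism exploiting $m>p$ is the missing ingredient in your outline.
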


As mentioned in the Introduction, it suffices to verify the assumption \eqref{eq. additional assumption} in Theorem \ref{thm. main} is satisfied in fixed ball close to the origin. In fact we will show that $\{u\ge 0\}$ contains an entire ball of sufficiently large radius, and then we can apply Theorem \ref{thm. main} with $\epsilon= \omega_n$ in the larger concentric balls. Towards this aim we first prove that $u$ can be arbitrarily close to $1$ for some $x\in B_{R}$ for a sufficiently large $R$. Then by the standard Harnack principle, $u\geq0$ in a sufficiently large ball around $x$.
\begin{lemma}\label{lem. u quickly tends to 1}
    Let $u$ be a minimizer with $u(0)=0$. Given any $h<1$, then there exists some $\rho=\rho(h)$ such that
    \begin{equation}
        \max_{B_{\rho}}u\geq1-h.
    \end{equation}
\end{lemma}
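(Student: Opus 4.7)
The plan is a proof by contradiction: assume $\sup_{B_\rho} u < 1-h$ for every $\rho$ with $B_\rho \subset \Omega$, and derive a contradiction for $\rho$ large. The argument combines three ingredients: a regularity input at the origin, an adapted version of Theorem~\ref{thm. main} applied at the shifted level $\{u \ge -\tfrac12\}$, and the basic energy bound $J(u,B_R) \le CR^{n-1}$ already established in~\eqref{P_R upper bound}.

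First, standard $C^{1,\alpha}$ regularity for the $p$-Laplace equation with bounded right-hand side (here $|W'(u)|\le 2m$ because $|u|\le 1$) gives a universal Lipschitz bound on $u$ at the origin. Combined with $u(0)=0$, this yields $u \ge -\tfrac12$ on a ball $B_{r_0}$ of universal radius, so that $|\{u \ge -\tfrac12\} \cap B_{r_0}| \ge \omega_n r_0^n$.

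The core of the plan is to re-run the six-step proof of Theorem~\ref{thm. main} at the shifted level $-\tfrac12$: replace the one-dimensional profile $U$ of Section~2.2 by $\tilde U(t):=U(t+t_0)$, where $t_0 = 1-2^{(m-p)/p}$ is chosen so that $\tilde U(0)=-\tfrac12$; use the competitor $\tilde v_R(x):=\tilde U(|x|-R)$; and track $\tilde V_R := |\{u\ge -\tfrac12\}\cap B_R|$ in place of $V_R$. Since $t_0$ is a finite universal shift, the asymptotic behavior of $\tilde U$ near $\pm 1$ matches that of $U$, and every step of the argument, including the discrete inequality~\eqref{discrete}, carries over for the corresponding $\tilde{\mathcal M}_R$. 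Seeding the induction with the initial density from the previous paragraph produces universal $\delta,R_1>0$ with $\tilde V_R \ge \delta R^n$ for all $R\ge R_1$. On the set $\{u \ge -\tfrac12\}$ the contradiction hypothesis forces $u\in[-\tfrac12,1-h]$, a range on which the degenerate potential is bounded below by $W(u)\ge c(h)\sim h^m$; hence
\[
c(h)\,\delta\,R^n \le \int_{B_R} W(u)\,dx \le J(u,B_R) \le CR^{n-1},
\]
which fails for $R\ge \rho(h) := C/(c(h)\delta)$. This supplies the required $\rho(h)$.

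The main obstacle lies in the third paragraph. One has to verify that no constant in the six-step proof of Theorem~\ref{thm. main} depended essentially on the level being $0$, and, more delicately, that the initial scale $r_0$ produced by the regularity step is large enough to trigger the induction of Section~2.6. If $r_0$ falls short of the required threshold, one exploits the fact that $\tilde V_\rho \ge \omega_n r_0^n$ persists for every $\rho \ge r_0$ to apply the adapted estimate at a larger (still universal) scale with a correspondingly smaller (still universal) value of $\epsilon$.
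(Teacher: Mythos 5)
Your plan is genuinely different from the paper's: instead of constructing a radial supersolution and sliding it (as the paper does), you try to seed the density-estimate machinery of Theorem~\ref{thm. main} directly at the shifted level $-\tfrac12$, using interior regularity at the origin to produce the initial ball. Unfortunately, the seeding step has a quantitative gap that your last paragraph acknowledges but does not repair. The $C^{1,\alpha}$ estimate gives $u\ge -\tfrac12$ on a ball $B_{r_0}$ whose radius $r_0$ is a \emph{small} universal constant (it is controlled by the reciprocal of the interior Lipschitz bound for $p\Delta_p u = W'(u)$), whereas Theorem~\ref{thm. main} requires the initial density to hold on a ball of radius at least $\rho_0(\epsilon)$, which is a \emph{large} universal constant; tracing the proof of Section~2.6 one finds $\rho_0(\epsilon)\sim \epsilon^{-\alpha}$ with $\alpha = 1 + \tfrac{n-1}{n\gamma} > 1$. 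There is no reason $r_0 \ge \rho_0(\omega_n)$, and in the generic case it will fail.

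Your proposed fix --- move to a larger scale $\rho$ and accept the smaller density $\epsilon(\rho) = \omega_n (r_0/\rho)^n$ --- does not work: the requirement $\rho \ge \rho_0(\epsilon(\rho))$ becomes $\rho \ge C(\rho/r_0)^{n\alpha}$ with $n\alpha > 1$, which is false for every $\rho\ge r_0$ once $r_0$ is below the threshold $\rho_0(\omega_n)$ (the right side starts larger at $\rho = r_0$ and grows strictly faster in $\rho$). This is precisely why the paper cannot seed the induction from a regularity estimate near the zero set, and instead proves the seed ball exists by a separate comparison argument. Concretely, the paper's proof of Lemma~\ref{lem. u quickly tends to 1} builds a radial supersolution $v$ of $p\Delta_p v < W'(v)$ in $B_R$ with $v(0)<0$ and $v\ge 1-h$ on $\partial B_R$, and then, assuming $u\le 1-h$ on $B_\rho$, slides translates $v(\cdot - x_0)$ over all $x_0\in B_{\rho/2}$; the maximum principle forces each ball $B_R(x_0)$ to contain a point with $u\in[v(0),1-h]$, and the gradient bound then gives a fixed amount of energy $c_1(h)$ in each such ball, hence $J(u,B_\rho)\gtrsim (\rho/R)^n$, contradicting $J(u,B_\rho)\le C\rho^{n-1}$ for $\rho$ large. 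Only afterwards, in the proof of Theorem~\ref{thm. main3}, does one convert closeness to $1$ into a \emph{large} ball where $u\ge 0$, using the degeneracy of the equation near $\pm1$ (since $p\Delta_p(1-u) = o(|1-u|^{p-1})$ the Harnack-type radius $\rho_1(h)\to\infty$ as $h\to 0$), and \emph{that} ball is large enough to apply Theorem~\ref{thm. main}. Your approach skips this mechanism and thus never produces a seed ball of the required size.
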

\begin{proof}
    We first construct a radial supersolution $v$ in a ball $B_R$ with the property that $v \ge 1-h$ on $\partial B_R$ and $v(0) <0$. 
    
    For this we consider a one dimensional profile $U(t)$ (different from the one in the previous section) as an ODE solution to
    \begin{equation}\label{eq. ODE with epsilon error}
        p\frac{d}{dt}\Big((U')^{p-1}\Big)=W'(U)-\epsilon, \quad \quad U(0)=0,
    \end{equation}
    for some $\epsilon>0$ small, in an interval $[a,b]$ where $U$ is increasing. By multiplying with $U'$ on both sides and integrating, we obtain
    \begin{equation}
    (p-1)(U')^p-W(U)+ \epsilon U = \eta,
    \end{equation}
    for some constant $\eta$. Then we solve for $U$ as $U=G^{-1}$, with $G$ defined as
    $$G(s) = \int_0^s  \Big(\frac{W(\tau)-\epsilon \tau+\eta}{p-1}\Big)^{-\frac{1}{p}}d\tau,$$
    and the domain of definition of $G$ is the interval $$[s_0, s_1]:=\{\tau | \quad W(\tau)-\epsilon \tau+\eta \ge 0\}.$$
    In view of the properties of $W$, we can choose $\epsilon$ and $\eta$ small, depending on $h$, so that $$[0, 1- h] \subset [s_0,s_1],$$
    and so that the graphs of $W(\tau)$ and the line $\epsilon \tau - \eta$ intersect transversally at $s_0$ and $s_1$. This means that $a=G(s_0)$ and $b=G(s_1)$ are finite, and $U'(a)=U'(b)=0$.
    
  We define $v$ as the rotation of $U$ around the axis $t=-r$, with $r$ large, i.e.   
  \begin{equation}
        v(x)=\left\{\begin{aligned}
            U(a),& \quad \mbox{ if }|x|-r\leq a,\\
            U(|x|-r),& \quad \mbox{ if } a \leq|x|-r\leq b,
        \end{aligned}\right.
    \end{equation}
and check that it has the desired properties in $B_R$ with $R=r+b$. 

Indeed, $v \in  C^1$, and $p\Delta_{p}v=0<W'(v)$ in $B_{r-a}$, while in the annular region $B_{r+b} \setminus B_{r-a}$ we have
    \begin{align}
        p\Delta_{p}v-W'(v)=&p\frac{d}{dt}\Big((U')^{p-1}\Big)+\frac{p(n-1)}{r+t}(U')^{p-1}-W'(U)\\
        =&-\epsilon+\frac{C}{r+t}(U')^{p-1}<0\quad\mbox{for all }t\in[a,b],
    \end{align}
provided that $r$ is chosen sufficiently large.

    Next assume by contradiction that the conclusion of Lemma~\ref{lem. u quickly tends to 1} is not true,
        \begin{equation}
        u\leq1-h\mbox{ everywhere in }B_{\rho},
    \end{equation}
    for some $\rho \ge 2 R$ large to be specified later. Then we claim that for every point $x_{0}\in B_{\rho/2}$,
    \begin{equation}\label{eq. u is somewhere above v}
        u(x)\geq v(0),\quad\mbox{for some }x\in B_{R}(x_{0}).
    \end{equation}
  Otherwise,
    \begin{equation}
        u(x)\leq v(0) \le v(x-x_{0})\mbox{ everywhere in }B_{R}(x_{0}),
    \end{equation}
    and then we slide continuously the graph of the translation of $v(x)$ from $x_{0}$ back to the origin. Since
    \begin{equation}
        u(x)\leq1-h,
    \end{equation}
    which is smaller than the boundary value of $v(x)$, and since $p\Delta_{p}v(x)<W'(v(x))$, we see from the maximal principle that $u(x)$ stays below the graph of the translations of $v(x)$ during the sliding process. This contradicts the assumption $u(0)=0 > v(0)$.

    From \eqref{eq. u is somewhere above v}, $u(x) \in [v(0),1-h]$, and using the gradient estimates for $u$ we conclude
    \begin{equation}
        \Big|B_{R}(x_{0})\cap\{v(0)<u<1-h\}\Big|\geq c_{0}(h).
    \end{equation}
   for some small constant $c_0(h)$, that depends on $h$. Thus
    \begin{equation}\label{eq. one estimate in a grid}
        J\Big(u,B_R(x_{0})\Big)\geq c_{1}(h)\quad\mbox{for every }x_{0}\in B_{\rho/2},
    \end{equation}
  which gives
    \begin{equation}
        J(u,B_{\rho})\geq c_2(h) (\rho/R)^n.
    \end{equation}
    This violates the energy estimate $J(u,B_{\rho})\leq C \rho^{n-1}$ proved in \eqref{P_R upper bound}, provided that $\rho$ is chosen sufficiently large. Therefore, we have proved Lemma~\ref{lem. u quickly tends to 1}.
\end{proof}

\begin{proof}[Proof of Theorem~\ref{thm. main3}]
    By Lemma~\ref{lem. u quickly tends to 1}, $u(x_{h})\geq1-h$ for some $|x_{h}|\leq \rho(h)$. Notice that
    \begin{equation}
        p\Delta_{p}(1-u)=-W'(u)=O(|1-u|^{m-1})=o(|1-u|^{p-1}),
    \end{equation}
    so when $h$ is sufficiently small, there exists some $\rho_{1}(h)$ with
    \begin{equation}
        \lim_{h\to0}\rho_{1}(h)=+\infty,
    \end{equation}
    such that the Harnack principle implies that
    \begin{equation}
        u\geq0\mbox{ everywhere in }B_{\rho_{1}(h)}(x_{h}).
    \end{equation}
    We then apply Theorem~\ref{thm. main} to $u$, but with the origin moved to $x_{0}$. It follows that if $h$ is chosen universally small satisfying
    \begin{equation}
        \rho_{1}(h)\geq\rho_{0}(\omega_{n})
    \end{equation}
    for $\rho_{0}(\epsilon)$ obtained in Theorem~\ref{thm. main}, then for every
    \begin{equation}
        R\geq\rho_{1}(h),
    \end{equation}
    we have
    \begin{equation}
        \Big|B_{R}(x_{h})\cap\{u\geq0\}\Big|\geq\delta_{1}R^{n}.
    \end{equation}
    Since $|x_{h}|\leq \rho(h)$, we let
    \begin{equation}
        R_{0}=\rho(h)+\rho_{1}(h),
    \end{equation}
    which becomes universal as our choice of $h$ is universal, then for some universal $\delta$, we have
    \begin{equation}
        \Big|B_{R}\cap\{u\geq0\}\Big|\geq\delta R^{n},\quad\mbox{for all }R\geq R_{0}.
    \end{equation}
\end{proof}


\end{document}